\begin{document}

\markboth{Mancho Manev}
{Manifolds with Almost Contact 3-Structure and Metrics of HN-Type}

\renewcommand\theenumi{(\roman{enumi})}
\renewcommand\labelenumi{{\theenumi}}
%
%
 \newtheorem{thm}{Theorem}[section]
 \newtheorem{cor}[thm]{Corollary}
 \newtheorem{lem}[thm]{Lemma}
 \newtheorem{prop}[thm]{Proposition}
 \theoremstyle{definition}
 \newtheorem{defn}[thm]{Definition}
 \theoremstyle{remark}
 \newtheorem{rem}[thm]{Remark}
 \newtheorem*{ex}{Example}
 \numberwithin{equation}{section}

\title[Manifolds with Almost Contact 3-Structure and Metrics of HN-Type]
{Manifolds with Almost Contact 3-Structure \\ and Metrics of Hermitian-Norden Type}

\author[M. Manev]{Mancho Manev} 


\dedicatory{Dedicated to Professor Georgi Ganchev on his seventieth birthday}

\subjclass[2010]{ 
Primary 53C15, 53C50; Secondary 53C26.
}

%
\keywords{ 
Almost contact manifold, 3-structure, B-metric, hypercomplex structure, Hermitian metric, Norden metric.
}

\address[1]{
Department of Algebra and Geometry \endgraf
Faculty of Mathematics and Informatics \endgraf
Plovdiv University \endgraf
236 Bulgaria Blvd  \endgraf
Plovdiv 4027 \endgraf
Bulgaria}
\email{mmanev@uni-plovdiv.bg}

\address[2]{
Department of Medical Informatics \endgraf
Biostatistics and Electronic Education  \endgraf
Faculty of Public Health  \endgraf
Medical University of Plovdiv \endgraf
15A Vasil Aprilov Blvd \endgraf
Plovdiv 4002 \endgraf
Bulgaria}
\email{mmanev@meduniversity-plovdiv.bg}


\newcommand{\MM}{\mathcal{M}}
\newcommand{\R}{\mathbb{R}}
\newcommand{\X}{\mathfrak{X}}
\newcommand{\f}{\varphi}
\newcommand{\ep}{\varepsilon}
\newcommand{\ea}{\varepsilon_\alpha}
\newcommand{\eb}{\varepsilon_\beta}
\newcommand{\eg}{\varepsilon_\gamma}
\newcommand{\n}{\nabla}
\newcommand{\W}{\mathcal{W}}
\newcommand{\al}{\alpha}
\newcommand{\bt}{\beta}
\newcommand{\gm}{\gamma}
\newcommand{\lm}{\lambda}
\newcommand{\ta}{\theta}
\newcommand{\fa}{\varphi_{\alpha}}
\newcommand{\xia}{\xi_{\alpha}}
\newcommand{\etaa}{\eta_{\alpha}}
\newcommand{\om}{\omega}
\newcommand{\D}{\mathrm{d}}
\newcommand{\LL}{\mathcal{L}}
\newcommand{\LLL}{\mathfrak{L}}
\newcommand{\SSS}{\mathcal{S}}
\newcommand{\C}{\mathbb{C}}
\newcommand{\F}{\mathcal{F}}
\newcommand{\ddt}{\textstyle\frac{\D}{\D t}}
\newcommand{\Ja}{J_{\alpha}}
\newcommand{\G}{\mathcal{G}}
\newcommand{\V}{\mathcal{V}}
\newcommand{\pd}{\partial}
\newcommand{\ddx}{\frac{\pd}{\pd x^i}}
\newcommand{\ddy}{\frac{\pd}{\pd y^i}}
\newcommand{\ddu}{\frac{\pd}{\pd u^i}}
\newcommand{\ddv}{\frac{\pd}{\pd v^i}}
\newcommand{\dda}{\frac{\pd}{\pd a}}
\newcommand{\ddb}{\frac{\pd}{\pd b}}
\newcommand{\ddc}{\frac{\pd}{\pd c}}
\newcommand{\Lfa}{(L,\allowbreak\f_{\al},\allowbreak\xi_{\al},\allowbreak\eta_{\al},g)}

\newcommand{\thmref}[1]{The\-o\-rem~\ref{#1}}
\newcommand{\propref}[1]{Pro\-po\-si\-ti\-on~\ref{#1}}


\begin{abstract}
It is introduced a differentiable manifold with almost contact 3-struc\-ture which consists of an almost contact metric structure and two almost contact B-metric structures. 
The product of this manifold and a real line is an almost hypercomplex manifold with Hermitian-Norden metrics. 
It is proven that the in\-tro\-duced manifold of cosymplectic type is flat. Some examples of the studied man\-i\-folds are given.
\end{abstract}

\maketitle

\section{Introduction}\label{intro}

The notion of \emph{almost contact 3-structure} is introduced by Y.Y. Kuo in \cite{Kuo} and independently under the name \emph{almost coquaternion structure} by C. Udri\-\c{s}te in \cite{Udr}. Later, it is studied by several authors. It is well known that the product of a man\-i\-fold with almost contact 3-structure and a real line admits an \emph{almost hypercomplex structure} (also known as an \emph{almost quaternion structure}) (cf. \cite{Kuo,AlMa}).

All authors previously considered the case when there exists a Riemannian metric compatible with each of the three structures in the given almost contact 3-structure. Then the object of study is the so-called \emph{almost contact metric 3-structure}.

In the present work, on a $(4n+3)$-dimensional manifold with almost contact 3-structure we introduce a pseudo-Riemannian metric which has another kind of compatibility with the triad of almost contact structures. The product of this manifold of new type and a real line is a $(4n+4)$-dimensional manifold which admits an almost hypercomplex structure $(J_1,J_2,J_3)$ and a Hermitian-Norden metric.
In our case, one of the almost complex structures (resp., the other two almost complex structures) of $(J_1,J_2,J_3)$ acts as an
isometry (resp., act as anti-isometries) with respect to the pseudo-Riemannian metric $G$ of neutral signature in each tangent fibre.
The metric $G$ is Hermitian with respect to the former structure of
$(J_1,J_2,J_3)$ and $G$ is a Norden metric (also known as an anti-Hermitian metric) regarding the latter structures of $(J_1,J_2,J_3)$. This structure is called an \emph{almost hypercomplex HN-metric structure} (HN stands for Hermitian-Norden) and it is studied in \cite{GriManDim12,GriMan24,Man28,ManGri32}, etc.

The goal of this paper is to launch a study of the manifolds with almost contact 3-structure and metrics of an HN-type.

\subsection*{Convention}
Let $\MM$ be an almost contact manifold and $\MM\times\R$ be the corresponding almost complex manifold.
\begin{enumerate}
  \item We shall use $x$, $y$, $z$, $\dots$ to denote smooth vector fields on $\MM$, i.e. $x, y, z\in \X(\MM)$, or vectors in the tangent space $T_p\MM$ at $p\in \MM$;
  \item We shall use $X$, $Y$, $Z$, $\dots$ to denote smooth vector fields on $\MM\times\R$ or tangent vectors in $T_{\bar{p}}(\MM\times\R)$ at $\bar{p}\in \MM\times\R$.
\end{enumerate}

\subsection*{Acknowledgments}
This work was partially supported by project NI15-FMI-004 of the
Scientific Research Fund at the University of Plovdiv.
The author wishes to thank Professor Stefan Ivanov for valuable discussions about the present paper.

\section{Almost contact metric manifolds}

Let $\MM$ be an odd-dimensional smooth manifold which is compatible
with an almost contact structure $(\f_{1},\xi_{1},\eta_{1})$, i.e. $\f_1$ is an endomorphism
of the tangent bundle, $\xi_1$ is a Reeb vector field and $\eta_1$ is its dual contact 1-form
satisfying the following identities:
\begin{equation}\label{str-al=1}
\begin{array}{c}
(\f_1)^2 = -I + \xi_1\otimes\eta_1,\quad
\f_1\xi_1 = o,\quad
\eta_1\circ\f_1=0,\quad \eta_1(\xi_1)=1,
\end{array}
\end{equation}
where $I$ is the identity in the algebra $\X(\MM)$ and $o$ is the zero element of $\X(\MM)$.
Moreover, let $g$ be a pseudo-Riemannian metric on $\MM$ which is compatible
with $(\f_{1},\xi_{1},\eta_{1})$ as follows:
\begin{equation}\label{A-met}
\begin{array}{c}
  g(\xi_{1},\xi_{1})=-\ep, \quad \eta_{1}(x)=-\ep g(\xi_{1},x),\\
  g(\f_{1} x,\f_{1} y)=g(x,y)+\ep\eta_{1}(x)\eta_{1}(y),
\end{array}
\end{equation}
where  $\ep=+1$ or $\ep=-1$. Then $(\f_{1},\xi_{1},\eta_{1},g)$ is called an \emph{almost contact metric structure} on $\MM$.

Usually, one can assume that $\ep=-1$ in \eqref{A-met} without loss of generality. This is conditioned since if we put $\bar\f_{1}=\f_{1}$, $\bar\xi_{1}=-\xi_{1}$, $\bar\eta_{1}=-\eta_{1}$, $\bar g=-g$, then $(\bar\f_{1},\bar\xi_{1},\bar\eta_{1},\bar g)$ for $\bar\ep=-\ep$ is also an almost contact metric structure on $\MM$ \cite{Taka}.
Here, we pay attention of the case of $\ep=+1$ which is in relation with our topic. We call the corresponding $(\f_{1},\xi_{1},\eta_{1},g)$-structure 
an \emph{almost contact metric structure} and $g$ --- a \emph{compatible metric} on $\MM$.

Since $g$ is a Hermitian metric with respect to the almost complex structure $\f_1\vert_{H_1}$ on the contact distribution $H_1=\ker(\eta_1)$, any metric with properties \eqref{A-met} can be considered as an odd-dimensional counterpart of the corresponding pseudo-Riemannian Hermitian metric, or this compatible metric is a pseudo-Riemannian metric of Hermitian type on an odd-dimensional differentiable manifold.

A classification of the almost contact metric manifolds is given in \cite{AlGa}. There, it is considered the vector space of the tensors of type $(0,3)$ defined by $F_1(x,y,z)=g\left(\left(\n_x\f_1\right)y,z\right)$, where $\n$ is the Levi-Civita connection generated by $g$.
They have the following basic properties
\begin{equation}\label{F1-prop}
\begin{array}{l}
  F_1(x,y,z)=- F_1(x,z,y)\\
  \phantom{F_1(x,y,z)}
  =- F_1(x,\f_1y,\f_1z)+F_1(x,\xi_1,z)\,\eta_1(y)\\
  \phantom{F_1(x,y,z)=- F_1(x,\f_1y,\f_1z)}
  +F_1(x,y,\xi_1)\,\eta_1(z).
\end{array}
\end{equation}
This vector space is decomposed in 12 orthogonal and invariant subspaces with respect to the action of the structural group $U\left(m\right)\times 1$, $m=\frac12(\dim\MM-1)$, where $U$ is the unitary group. In such a way, it is obtained a classification in 12 basic classes $\W_i$ $(i=1,2,\dots,12)$ with respect to $F_1$. Bearing in mind the above remarks we can use the same classification for almost contact metric manifolds.

The class $\W_0$ of cosymplectic  manifolds is contained in any other class.
There are $2^{12}$ classes at all. Some of these classes are discovered before the complete classification and they are known by special names. For example,
$\W_2\oplus\W_4$ is the class of quasi-Sasakian manifolds,
$\W_3\oplus\W_5$ is the class of quasi-Kenmotsu manifolds,
$\W_2\oplus\W_3$ is the class of trans-Sasakian manifolds of type $(\al,\bt)$
 and so on.

Bearing in mind \eqref{A-met}, we establish that the covariant derivatives of the fundamental tensors with respect to $\n$ are related by
\begin{equation}\label{Fetaxi1}
  F_1(x,\f_1y,\xi_1)=- \left(\n_x\eta_1\right)(y)=g\left(\n_x\xi_1,y\right).
\end{equation}

\section{Almost contact B-metric manifolds}

Let $\MM$ be equipped with another almost contact structure $(\f_2,\xi_2,\eta_2)$. The metric $g$ is called a \emph{B-metric} on this almost contact manifold if it obey the following relations:
\begin{equation}\label{B-met}
\begin{array}{c}
  g(\xi_{2},\xi_{2})=1, \quad \eta_{2}(x)=g(\xi_{2},x),\\
  g(\f_{2} x,\f_{2} y)=-g(x,y)+\eta_{2}(x)\eta_{2}(y).
\end{array}
\end{equation}
In this case $(\f_{2},\xi_{2},\eta_{2},g)$ is called an \emph{almost contact B-metric structure} on $\MM$ \cite{GaMiGr}.

Since $g$ is a Norden metric 
with respect to the almost complex structure $\f_2$ on the contact distribution $H_2=\ker(\eta_2)$, any B-metric can be considered as an odd-dimensional counterpart of the corresponding Norden metric, or the B-metric is a pseudo-Riemannian metric of Norden type on an odd-dimensional differentiable manifold.

A classification of the almost contact B-metric manifolds is given in \cite{GaMiGr}. There, it is considered the vector space of the fundamental tensors of type $(0,3)$ defined by $F_2(x,y,z)=g\left(\left(\n_x\f_2\right)y,z\right)$ and having the properties
\begin{equation}\label{F2-prop}
\begin{array}{l}
  F_2(x,y,z)= F_2(x,z,y)\\
  \phantom{F_2(x,y,z)}
  = F_2(x,\f_2y,\f_2z)+F_2(x,\xi_2,z)\,\eta_2(y)\\
  \phantom{F_2(x,y,z)= F_2(x,\f_2y,\f_2z)}
    +F_2(x,y,\xi_2)\,\eta_2(z).
\end{array}
\end{equation}
This vector space is decomposed in 11 orthogonal and invariant subspaces with respect to the action of the structural group $(GL(m;\C)\cap O(m,m))\times 1$, $m=\frac12(\dim\MM-1)$, where $O(m,m)$ is the pseudo-orthogonal group of neutral signature. Thus, it is obtained a classification in 11 basic classes $\F_i$ $(i=1,2,\dots,11)$ with respect to $F_2$.

The intersection of the basic classes is the special class $\F_0$
determined by the condition $F_2=0$. This is the class of \emph{cosymplectic B-metric manifolds}.

From \eqref{A-met}, we obtain the following relations between the covariant derivatives of the fundamental tensors with respect to $\n$:
\begin{equation}\label{Fetaxi2}
  F_2(x,\f_2y,\xi_2)= \left(\n_x\eta_2\right)(y)=g\left(\n_x\xi_2,y\right).
\end{equation}

\section{Manifolds with almost contact HN-metric 3-structure}\label{sec:1}

Let $(\MM,\f_{\al},\xi_{\al},\eta_{\al})$ $(\al=1,2,3)$ be a manifold
with an almost contact 3-struc\-ture, 
i.e. $\MM$
is a $(4n+3)$-dimensional differentiable manifold with three almost
contact structures $(\f_{\al},\xi_{\al},\eta_{\al})$ $(\al=1,2,3)$ consisting of endomorphisms
$\f_{\al}$ of the tangent bundle,  Reeb vector fields $\xi_{\al}$ and their dual contact 1-forms
$\eta_{\al}$ satisfying the following identities:
\begin{equation}\label{str}
\begin{array}{c}
\f_{\al}\circ\f_{\bt} = -\delta_{\al\bt}I + \xi_{\al}\otimes\eta_{\bt}+\epsilon_{\al\bt\gm}\f_{\gm},\\
\f_{\al}\xi_{\bt} = \epsilon_{\al\bt\gm}\xi_{\gm},\quad
\eta_{\al}\circ\f_{\bt}=\epsilon_{\al\bt\gm}\eta_{\gm},\quad \eta_{\al}(\xi_{\bt})=\delta_{\al\bt},
\end{array}
\end{equation}
where $\al,\bt,\gm\in\{1,2,3\}$, $I$ is the identity on the algebra $\X(\MM)$
, $\delta_{\al\bt}$ is the Kronecker delta, $\epsilon_{\al\bt\gm}$ is the Levi-Civita symbol, i.e. either the sign of the permutation $(\al,\bt,\gm)$ of $(1,2,3)$ or 0 if any index is repeated.




In this section we discuss the coherence of compatible metrics and B-metrics in an almost contact 3-structure.
In \cite{Kuo}, it is considered the case of a Riemannian metric which is compatible by equations \eqref{A-met} for the three almost contact structures.

Suppose that $\MM$ admits two almost contact structures $(\f_{\al},\xi_{\al},\eta_{\al})$, $(\al=2,3)$. If a pseudo-Riemannian metric $g$ is a B-metric for the both structures, then the property in the first line of \eqref{str} implies the properties in the second line of the same equations.

In \cite{Kuo} for the case of Riemannian metrics (positive definite), it is proved that if the almost contact 3-structure admits two almost contact metric structures, then the third one is of the same type.
We consider the relevant cases for our investigations in the following theorem.

\begin{thm}\label{thm:3str}
Let $\MM$ admit an almost contact 3-structure $(\f_{\al},\xi_{\al},\eta_{\al}),$ $(\al=1,\allowbreak{}2,\allowbreak{}3)$
and a pseudo-Riemannian metric $g$.
If one of the three structures $(\f_{\al},\xi_{\al},\eta_{\al},g)$ 
is an almost contact B-metric structure, then other two ones are an almost contact metric structure and an almost contact B-metric structure.
\end{thm}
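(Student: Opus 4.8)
The plan is to reduce to the index $\al=1$ and then to transport the compatibility of $g$ with $\f_1$ onto $\f_2$ and $\f_3$ by means of the purely algebraic relations \eqref{str}. Since \eqref{str} is invariant under a cyclic relabelling of the indices $1,2,3$, I may assume that $(\f_1,\xi_1,\eta_1,g)$ is the given almost contact B-metric structure, so that \eqref{B-met} holds for $\al=1$. First I would record two facts used throughout. On one hand, $\f_1$ is $g$-self-adjoint: replacing $x$ by $\f_1 x$ in the last equality of \eqref{B-met} and invoking $\f_1^2=-I+\xi_1\otimes\eta_1$, $\eta_1\circ\f_1=0$ (from \eqref{str}) and $\eta_1=g(\xi_1,\cdot)$, one obtains $g(\f_1 x,y)=g(x,\f_1 y)$. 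On the other hand, specialising the first line of \eqref{str} to the pair $(1,2)$ gives $\f_3=\f_1\circ\f_2-\xi_1\otimes\eta_2$, and the second line of \eqref{str} supplies the index-shift identities $\eta_1\circ\f_2=\eta_3$, $\eta_1\circ\f_3=-\eta_2$, $\f_1\xi_1=o$ and $\f_1\xi_2=\xi_3$.

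The heart of the argument is a short computation of $g(\f_3 x,\f_3 y)$ from $\f_3=\f_1\circ\f_2-\xi_1\otimes\eta_2$. When the product is expanded, the two mixed terms vanish because $g(\f_1\f_2 x,\xi_1)=g(\f_2 x,\f_1\xi_1)=0$ (self-adjointness of $\f_1$ and $\f_1\xi_1=o$), while the surviving term $g(\f_1\f_2 x,\f_1\f_2 y)$ is rewritten by applying the B-metric identity of $\f_1$ to the pair $\f_2 x,\f_2 y$, using $\eta_1\circ\f_2=\eta_3$ and $g(\xi_1,\xi_1)=1$. This should yield
\[
\bigl[g(\f_2 x,\f_2 y)-\eta_2(x)\eta_2(y)\bigr]+\bigl[g(\f_3 x,\f_3 y)-\eta_3(x)\eta_3(y)\bigr]=0 .
\]
In addition, applying the B-metric identity of $\f_1$ to $x=y=\xi_2$ together with $\f_1\xi_2=\xi_3$, $\eta_1(\xi_2)=0$ gives $g(\xi_3,\xi_3)=-g(\xi_2,\xi_2)$, and $g(\xi_1,\xi_2)=g(\xi_1,\xi_3)=0$ because $\eta_1=g(\xi_1,\cdot)$; so $\xi_1,\xi_2,\xi_3$ are mutually $g$-orthogonal.

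To conclude, set $B_\al(x,y):=g(\f_\al x,\f_\al y)-\eta_\al(x)\eta_\al(y)$ for $\al=2,3$; the displayed identity reads $B_2+B_3=0$. The remaining point is to show that $B_\al$ equals, up to a sign, the metric itself — equivalently, that $\f_2$ (hence also $\f_3$) is either $g$-skew-adjoint, in which case $B_\al=g$, or $g$-self-adjoint, in which case $B_\al=-g$. I expect this to come out by pushing \eqref{str} one step further — for instance combining a second specialisation of its first line, namely $\f_1=\f_2\circ\f_3-\xi_2\otimes\eta_3$, with the self-adjointness of $\f_1$, to deduce that $\f_2$ is necessarily $g$-self-adjoint or $g$-skew-adjoint. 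Granting this, $B_2+B_3=0$ forces exactly one of $B_2,B_3$ to be $g$ and the other $-g$. If $B_2=g$, then $g(\f_2 x,\f_2 y)=g(x,y)+\eta_2(x)\eta_2(y)$, whence (putting $y=\xi_2$, resp.\ $x=y=\xi_2$) $\eta_2=-g(\xi_2,\cdot)$ and $g(\xi_2,\xi_2)=-1$, so that $(\f_2,\xi_2,\eta_2,g)$ satisfies \eqref{A-met} with $\ep=+1$ and is an almost contact metric structure; at the same time $B_3=-g$ gives $g(\f_3 x,\f_3 y)=-g(x,y)+\eta_3(x)\eta_3(y)$, $\eta_3=g(\xi_3,\cdot)$, $g(\xi_3,\xi_3)=1$, which is precisely \eqref{B-met} for $\al=3$, so $(\f_3,\xi_3,\eta_3,g)$ is an almost contact B-metric structure. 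The case $B_2=-g$ merely interchanges the two, which is the assertion of the theorem.

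I expect the main obstacle to be exactly this last step — forcing $B_\al=\pm g$, i.e.\ transporting the B-metric character of $\f_1$ through the non-commuting products $\f_1\circ\f_2$ and $\f_2\circ\f_3$ while keeping the signs coming from $\epsilon_{\al\bt\gm}$ under control. Everything preceding it — the self-adjointness of $\f_1$, the identity $B_2+B_3=0$, and the recovery of the Reeb-field normalisations — should be routine bookkeeping with \eqref{str} and \eqref{B-met}.
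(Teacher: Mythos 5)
Your preliminary computations are correct: from \eqref{B-met} for $\al=1$ one does get the $g$-self-adjointness of $\f_1$, the identity $B_2+B_3=0$ with $B_\al(x,y)=g(\f_\al x,\f_\al y)-\eta_\al(x)\eta_\al(y)$, and the relations $g(\xi_3,\xi_3)=-g(\xi_2,\xi_2)$, $g(\xi_1,\xi_2)=g(\xi_1,\xi_3)=0$. But the step you defer --- forcing $B_\al=\pm g$, equivalently that $\f_2$ be $g$-self- or $g$-skew-adjoint --- is not routine bookkeeping; it cannot be extracted from the single hypothesis that $(\f_1,\xi_1,\eta_1,g)$ is a B-metric structure, because that hypothesis does not control $g$ on $\mathrm{span}(\xi_2,\xi_3)$ beyond the relations you already listed. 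Concretely, take $\R^3$ with the almost contact $3$-structure determined by \eqref{str} (so the common kernel of the $\eta_\al$ is zero) and the metric given in the basis $(\xi_1,\xi_2,\xi_3)$ by $g(\xi_1,\xi_1)=1$, $g(\xi_2,\xi_3)=1$, all other entries zero. This is pseudo-Riemannian and satisfies \eqref{B-met} for $\al=1$, yet $g(\xi_2,\xi_2)=g(\xi_3,\xi_3)=0$, so neither of the remaining structures is almost contact metric or B-metric, and $\f_2$ is neither self- nor skew-adjoint (e.g.\ $g(\f_2\xi_1,\xi_2)=-1$ while $g(\xi_1,\f_2\xi_2)=0$). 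The same phenomenon occurs in every dimension $4n+3$: modify \eqref{metric} only on the span of $\xi_1,\xi_3$ by $g(\xi_1,\xi_1)=g(\xi_3,\xi_3)=0$, $g(\xi_1,\xi_3)=1$; then $(\f_2,\xi_2,\eta_2,g)$ is still B-metric while the other two structures are of neither type. (Note also that even your stated equivalence needs more: $\f_2$ skew-adjoint gives $B_2=g$ only together with $\eta_2=-g(\xi_2,\cdot)$, which is itself part of what has to be proved.) So the route that takes the theorem's hypothesis literally, as information about one structure only, cannot be completed.

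The paper's own proof has a different shape: it assumes the compatibility type of \emph{two} of the structures and deduces the third, i.e.\ it establishes that two metric structures force a metric one, two B-metric structures force a metric one, and a metric plus a B-metric structure force a B-metric one, carrying out in detail the second implication via $\f_1=\f_2\circ\f_3-\xi_2\otimes\eta_3$; the theorem (the pseudo-Riemannian analogue of Kuo's result) is to be read in that sense. Once you grant such a hypothesis on a second structure, your own computation closes at once: if $(\f_2,\xi_2,\eta_2,g)$ is also B-metric, then $B_2=-g$, so your identity gives $B_3=g$, while $\eta_3=-g(\xi_3,\cdot)$ and $g(\xi_3,\xi_3)=-1$ follow from $\xi_3=\f_1\xi_2$, the self-adjointness of $\f_1$ and $g(\xi_3,\xi_3)=-g(\xi_2,\xi_2)$, yielding \eqref{A-met} with $\ep=+1$ --- essentially the paper's computation with the indices permuted. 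What is missing from your write-up is therefore not a further manipulation of \eqref{str} but an additional compatibility hypothesis on one more of the three structures.
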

\begin{proof}
First we establish on $\MM$ that if the pseudo-Riemannian metric $g$ and two of the almost contact structures generate:
\begin{enumerate}
  \item two almost contact metric structures, then the third one is an almost contact metric structure;
  \item two almost contact B-metric structures, then the third one is an almost contact metric structure;
  \item an almost contact metric structure and an almost contact B-metric structure, then the third one is an almost contact B-metric structure.
\end{enumerate}

Now, we argue for the case (ii). Let $(\f_{2},\xi_{2},\allowbreak{}\eta_{2},g)$ be an almost contact B-metric structure, i.e. \eqref{B-met} holds. Moreover, $(\f_{3},\xi_{3},\eta_{3},g)$ be also an almost contact B-metric structure, i.e. the following properties are valid
\begin{equation}\label{B-met3}
\begin{array}{c}
  g(\xi_{3},\xi_{3})=1, \quad \eta_{3}(x)=g(\xi_{3},x),\\
  g(\f_{3} x,\f_{3} y)=-g(x,y)+\eta_{3}(x)\eta_{3}(y).
\end{array}
\end{equation}
Then, by virtue of $\f_1=\f_2\circ\f_3-\xi_2\otimes\eta_3$, $\eta_2\circ\f_2=0$ and $\eta_2\circ\f_3=\eta_1$, which are consequences of \eqref{str}, and using \eqref{B-met} and \eqref{B-met3}, we obtain
\[
\begin{array}{l}
g(\f_1x,\f_1y)=g\bigl(\f_2(\f_3x)-\eta_3(x)\xi_2,\f_2(\f_3y)-\eta_3(y)\xi_2\bigr)\\
\phantom{g(\f_1x,\f_1y)}
=g(x,y)+\eta_1(x)\eta_1(y).
\end{array}
\]
Therefore, comparing with \eqref{A-met}, the metric $g$ is a compatible metric with respect to the almost contact structure $(\f_{1},\xi_{1},\eta_{1})$.

The verifications of the other cases are similar.
\end{proof}

Since any compatible metric and any B-metric on an almost contact manifold $\MM$ are metrics corresponding to a Hermitian metric and a Norden metric on the corresponding almost complex manifold $\MM\times\R$ (or on the corresponding contact distribution $H=\ker(\eta)$), respectively, we said that the compatible metric and the B-metric are metrics of Hermitian type and Norden type on $\MM$, respectively. Then, we give the following
\begin{defn}
We call a pseudo-Riemannian metric $g$ a \emph{metric of Hermitian-Norden type} (in short an \emph{HN-metric}) on a manifold with almost contact 3-structure $(\MM,\allowbreak{}\f_{\al},\allowbreak{}\xi_{\al},\allowbreak{}\eta_{\al})$, if it satisfies the identities
\begin{equation}\label{HN-met}
  g(\f_{\al}x,\f_{\al}y)=\ea g(x,y)+\eta_{\al}(x)\eta_{\al}(y),\quad \al=1,2,3
\end{equation}
for some cyclic permutation $(\ep_1,\ep_2,\ep_3)$ of $(1,-1,-1)$.
Then, $(\f_{\al},\xi_{\al},\allowbreak{}\eta_{\al},g)$ we call an \emph{almost contact HN-metric 3-structure}.

Let us suppose for the sake of definiteness that
\begin{equation}\label{ea}%
 \ea=
\begin{cases}
\begin{array}{ll}
1, \quad & \al=1;\\
-1, \quad & \al=2,3.
\end{array}
\end{cases}
\end{equation}
\end{defn}

As a sequel of \eqref{HN-met} we have the following properties for all $\al=1,2,3$:
\begin{equation}\label{eta-g}
\eta_{\al}=-\ea \xi_{\al} \lrcorner\, g,
\end{equation}
\begin{equation}\label{HN-met-2f}
  g(\f_{\al}x,y)=-\ea g(x,\f_{\al}y).
\end{equation}

Bearing in mind \eqref{HN-met-2f}, we deduce the following.
In the case $\al=1$, the associated tensor field of type $(0,2)$ is a 2-form. Let us denote it by $\widetilde g_{1}$, i.e. $\widetilde g_{1}(x,y)=g(\f_{1}x,y)$. It is actually opposite to $\Phi(x,y)=g(x,\f_{1}y)$, known as the \emph{fundamental} 
\emph{2-form} of the almost contact metric structure.
In other two cases $\al=2$ and $\al=3$, the tensor $(0,2)$-field $g(\f_{\al}x,y)$ is symmetric as well as $\etaa\otimes\etaa$. Then, we define the following fundamental tensor $(0,2)$-fields by
\begin{equation}\label{HN-met2}
  \widetilde g_{\al}(x,y)=g(\f_{\al}x,y)+\eta_{\al}(x)\eta_{\al}(y),\quad \al=2,3.
\end{equation}
Therefore, $\widetilde g_{2}$ and $\widetilde g_{3}$ are also HN-metrics, which we call \emph{associated metrics} to $g$ with respect to $(\f_{\al},\xi_{\al},\eta_{\al})$ for $\al=2$ and $\al=3$, respectively.

Bearing in mind the structural groups of the almost contact 3-structures with compatible metric (\cite{Kuo}) and the hypercomplex manifolds with HN-metric structure (\cite{GriMan24}), we can conclude the following.
The structural group of the manifolds with almost contact HN-metric 3-structure is $(GL(n,\mathbb{H})\cap O(2n,2n))\times O{(2,1)}$, where $GL(n,\mathbb{H})$ is  the group of invertible quaternionic $(n\times n)$-matrices and $O(p,q)$ is the pseudo-orthogonal group of signature $(p,q)$ for natural numbers $p$ and $q$.

The fundamental tensors of a manifold with almost contact HN-metric 3-struc\-ture are the three
$(0,3)$-tensors determined by
\begin{equation}\label{F}
F_\al (x,y,z)=g\bigl( \left( \n_x \f_\al
\right)y,z\bigr),\qquad
\al=1,2,3,
\end{equation}
where $\n$ is the Levi-Civita connection generated by $g$.
They have the following basic properties caused by the structures
\begin{equation}\label{Fa-prop}
\begin{array}{l}
  F_{\al}(x,y,z)=-\ea F_{\al}(x,z,y)\\
  \phantom{F_{\al}(x,y,z)}
  =-\ea F_{\al}(x,\f_{\al}y,\f_{\al}z)+F_{\al}(x,\xi_{\al},z)\,
  \eta_{\al}(y)\\
  \phantom{F_{\al}(x,y,z)=-\ea F_{\al}(x,\f_{\al}y,\f_{\al}z)}
    +F_{\al}(x,y,\xi_{\al})\,\eta_{\al}(z).
\end{array}
\end{equation}

The following associated 1-forms, defined as traces of $F_{\al}$, are known as their \emph{Lee forms}:
\begin{equation}\label{ta}
\begin{array}{c}
\theta_{\al}(z)=g^{ij}F_{\al}(e_i,e_j,z),\quad \theta^*_{\al}(z)=g^{ij}F_{\al}(e_i,\f_{\al}e_j,z),\\ \om_{\al}(z)=F_{\al}(\xi_{\al},\xi_{\al},z),
\end{array}
\end{equation}
where $g^{ij}$ are the components of the inverse matrix of the metric $g$ with respect to an arbitrary basis of the type $\{e_1,e_2,\dots,e_{4n+2},\xi_{\al}\}$.

The simplest case  of the manifolds with
almost contact HN-metric 3-structure is when the
structures are $\n$-parallel, $\n\f_\al=\n\xi_\al=\n\eta_\al=\n g=\n
\widetilde{g}_\al=0$, and it is determined by the condition
$F_\al=0$.
We call these manifolds \emph{cosymplectic HN-metric manifolds} or \emph{manifolds with cosymplectic HN-metric 3-structure}.


\section{Relation with pseudo-Riemannian manifolds equipped with almost complex or almost hypercomplex structures}

We can consider each of the three $(4n+2)$-dimensional  distributions
$H_{\al}=\ker(\eta_{\al})$, $\al\in\{1,2,3\}$, equipped with a corresponding pair of an almost complex structure
$J_{\al}=\f_{\al}|_{H_{\al}}$ and a metric $h_{\al}=g|_{H_{\al}}$, where $\f_{\al}|_{H_{\al}}$, $g|_{H_{\al}}$ are the restrictions of $\f_{\al}$, $g$ on $H_{\al}$, respectively, and the metrics $h_{\al}$
are compatible with $J_{\al}$ as follows
\begin{equation}\label{norden}
\begin{array}{l}
h_{\al}(J_{\al}X,J_{\al}Y)=\ea h_{\al}(X,Y) ,\quad \\
\widetilde{h}_{\al}(X,Y):=h_{\al}(J_{\al}X,Y)=-\ea h_{\al}(X,J_{\al}Y).
\end{array}
\end{equation}
Obviously, in the cases $\al=2$ and $\al=3$ the metrics $h_{\al}$ and their associated $(0,2)$-ten\-sors $\widetilde{h}_{\al}$ are Norden metrics, whereas for $\al=1$ the structure $(J_1,h_{1})$ is an almost Hermitian pseudo-Riemannian structure with a fundamental 2-form $\Omega=-\widetilde{h}_{1}$.
In such a way, any of the distributions $H_{\al}$ for $\al=2$ or $\al=3$ can be considered as a $(2n+1)$-dimensional
complex Riemannian distribution with a complex Riemannian metric
$g_{\al}^{\mathbb{C}}=h_{\al}+\widetilde h_{\al}\sqrt{-1}=g|_{H_{\al}}+\widetilde{g}|_{H_{\al}}\sqrt{-1}$.
In another point of view, the distribution $H_{\al}$ for $\al=2$ or $\al=3$ is a $(4n+2)$-dimensional almost complex distribution with a Norden metric $h_{\al}$ and its associated Norden metric $\widetilde h_{\al}$.
Moreover, the $4n$-dimensional distribution $H=H_1\cap H_2\cap H_3$ has an almost hypercomplex structure $(J_1,J_2,J_3)$, i.e. $J_{\al}^2=-I$, $J_3=J_1J_2=-J_2J_1$, $J_{\al}=\f_{\al}\vert_{H}$,
with a pseudo-Riemannian metric $h=g|_H$ which is Hermitian with respect to $J_1$ and a Norden metric with respect to $J_2$ and $J_3$  since
$
h(J_{\al}X,J_{\al}Y)=\ea h(X,Y)$, $\al=1,2,3$.

%

Let the vector $4n$-tuple
\[
(e_1, \dots, e_n; J_1e_1, \dots, J_1e_n; J_2e_1, \dots, J_2e_n; J_3e_1,\allowbreak{} \dots,\allowbreak{} J_3e_n)
\]
be an adapt\-ed basis  (or a $J_{\al}$-basis) of the almost hypercomplex structure. 
Then, according to \eqref{HN-met}, the basis
\begin{equation}\label{f-baza}
(e_1, \dots , e_n; \f_1e_1, \dots , \f_1e_n; \f_2e_1, \dots , \f_2e_n; \f_3e_1, \dots , \f_3e_n;\xi_1,\xi_2,\xi_3)
\end{equation}
is an an \emph{adapted basis} (or a $\f_{\al}$-basis) for the almost contact 3-structure and it is orthonormal with respect to $g$, i.e.
\begin{equation}\label{g-basis}
\begin{array}{l}
  g(e_i,e_i)=\ea g(\f_{\al}e_i,\f_{\al}e_i)=-\ea g(\xi_{\al},\xi_{\al})=1,\\
  g(e_i,e_j)=g(e_i,\f_{\al}e_i)=g(e_i,\f_{\al}e_j)=g(e_i,\xi_{\al})=g(\f_{\bt}e_i,\xi_{\al})=0
\end{array}
\end{equation}
for arbitrary $i\neq j\in\{1,2,\dots,n\}$ and $\al,\bt=1,2,3$.

It is well known that an even-dimensional almost complex manifold
$(N,J,h)$ endowed with a compatible Riemannian metric, i.e. $h(JX,JY)=h(X,Y)$, is an almost Hermitian manifold. There are considered also almost pseudo-Hermitian manifolds, i.e. the case when $h$ is a pseudo-Riemannian metric with the same compatibility (cf. \cite{Mats81}). 
We recall that
$(N,J,h)$ equipped with a pseudo-Riemannian metric of neutral signature
satisfying the identity $h(JX,JY)=- h(X,Y)$ is known as an almost complex
manifold with Norden metric \cite{N1,GGM85}, 
a generalized B-manifold \cite{GriMekDje85a}, 
an almost
complex manifold with B-metric \cite{GGM87} 
or an almost complex
manifold with complex Riemannian metric \cite{LeB,Manin,GaIv}. 
In the case when the almost complex structure $J$ is parallel with respect to
the Levi-Civita connection $\nabla^h$ of the metric $h$,
i.e. $\nabla^hJ=0$, then the manifold is known as a K\"ahler-Norden
manifold, a K\"ahler manifold with B-metric or a holomorphic
complex Riemannian manifold. Then the almost complex
structure $J$ is integrable and the local components of the complex metric
in holomorphic coordinate system are holomorphic functions.

From another point of view, it is well known the notion of the almost hypercomplex manifold with Hermitian metric (cf. \cite{AlMa}). The case when the metric is pseudo-Riemannian of neutral signature where one of the almost complex structures acts as an anti-isometry is considered in \cite{GriManDim12,GriMan24,ManGri32}. Then the almost hypercomplex structure $(J_1,J_2,J_3)$ and the metric $h$ generate two almost complex structures with Norden metrics (e.g., for $\al=2,3$) and one almost complex structure with Hermitian pseudo-Riemannian metric (e.g., for $\al=1$) because of \eqref{norden}.


The manifolds with almost contact HN-metric 3-structure can be considered as real hypersurfaces of an almost hypercomplex HN-metric manifold.
In \cite{GriManDim12} it is proved that every almost hypercomplex HN-metric manifold with parallel almost complex structures is flat.


In case of cosymplectic HN-metric manifolds the distribution $H$ is involutive. The
corresponding integral submanifold is a totally geodesic
submanifold which inherits a holomorphic hypercomplex Riemannian
structure and the manifolds with almost contact HN-metric 3-structure is
locally  a pseudo-Riemannian product of a holomorphic hypercomplex
Riemannian manifold with a 3-dimensional Lorentzian real space.

\section{Curvature properties of manifolds with almost contact HN-metric 3-structure}

A tensor $L$ of type $(0,4)$ with the prop\-er\-ties:%
\begin{equation}\label{curv}%
\begin{array}{l}%
L(x,y,z,w)=-L(y,x,z,w)=-L(x,y,w,z),\\
L(x,y,z,w)+L(y,z,x,w)+L(z,x,y,w)=0
\end{array}%
\end{equation} %
is called a \emph{curvature-like tensor}.

We say that a curvature-like tensor $L$ is a \emph{K\"ahler-like
tensor} on a manifold with almost contact HN-metric 3-structure when $L$ satisfies the properties: %
\begin{equation}\label{L-kel}%
L(x,y,z,w)=\ea L(x,y,\f_{\al} z,\f_{\al} w).
\end{equation} 
Using \eqref{str} and \eqref{HN-met}, we obtain that for a K\"ahler-like tensor $L$ the following properties are valid
\begin{equation}\label{L-kel2}%
\begin{array}{c}
L(x,y,z,w)=\ea L(x,\f_{\al} y,\f_{\al} z,w)=\ea L(\f_{\al} x,\f_{\al} y,z,w)\\
L(\xi_{\al},y,z,w)=L(x,\xi_{\al},z,w)=L(x,y,\xi_{\al},w)=L(x,y,z,\xi_{\al})=0.
\end{array}
\end{equation}
These properties show that if $L$ is a K\"ahler-like tensor on a manifold with almost contact HN-metric 3-structure then $L$ is a K\"ahler-like tensor on $(H,J_{\al}=\f_{\al}|_H,h=g|_H)$ which is a manifold with almost hypercomplex HN-metric structure. It is known from \cite{Man28} that every K\"ahler-like tensor vanishes on an almost hypercomplex HN-metric manifold. Therefore, it is valid the following
\begin{prop}\label{prop:L=0}
Every K\"ahler-like tensor vanishes on a manifold with almost contact HN-metric 3-structure.
\end{prop}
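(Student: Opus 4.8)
The plan is to reduce the statement on the $(4n+3)$-dimensional manifold with almost contact HN-metric 3-structure to the already-known vanishing result on the associated $4n$-dimensional almost hypercomplex HN-metric manifold, namely the result cited from \cite{Man28} that every K\"ahler-like tensor vanishes there. The excerpt has essentially set up this reduction already: the properties \eqref{L-kel2}, derived from \eqref{str} and \eqref{HN-met}, show that a K\"ahler-like tensor $L$ satisfying \eqref{L-kel} is completely degenerate in every slot along each $\xi_{\al}$, and hence is determined by its restriction to $H=H_1\cap H_2\cap H_3$, where it is a K\"ahler-like tensor with respect to each $J_{\al}=\f_{\al}|_H$. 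So the skeleton of the proof is: (1) invoke the identities \eqref{L-kel2}; (2) observe that therefore $L$ is, under the canonical splitting $T_p\MM = H_p \oplus \mathrm{span}\{\xi_1,\xi_2,\xi_3\}$, supported entirely on $H_p$; (3) note that $(H,J_{\al}=\f_{\al}|_H,h=g|_H)$ is a manifold with almost hypercomplex HN-metric structure, as recorded in Section 5; (4) apply the theorem of \cite{Man28} to conclude $L|_H=0$, and combine with (2) to get $L=0$ everywhere.

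Concretely, the first thing I would do is fix an arbitrary $p\in\MM$ and decompose each of the four arguments of $L$ into its $H_p$-component and its component in $\mathrm{span}\{\xi_1,\xi_2,\xi_3\}$, using multilinearity. By the second line of \eqref{L-kel2}, every term in the resulting expansion that contains at least one $\xi_{\al}$ in any slot vanishes; hence $L(x,y,z,w)=L(x^H,y^H,z^H,w^H)$, where the superscript denotes the $H_p$-projection. Then I would check that the curvature-like symmetries \eqref{curv} and the K\"ahler-like property \eqref{L-kel}, now read only for $J_{\al}$ acting on $H$, make $L|_H$ a K\"ahler-like tensor in the sense appropriate to an almost hypercomplex HN-metric manifold; the first line of \eqref{L-kel2} gives the equivalent forms $L(x,\f_{\al}y,\f_{\al}z,w)=\ea L(x,y,z,w)$ and $L(\f_{\al}x,\f_{\al}y,z,w)=\ea L(x,y,z,w)$ on $H$, which is exactly what the notion of K\"ahler-like tensor on an almost hypercomplex HN-metric manifold requires for all three $\al$. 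Finally I would cite \cite{Man28} to conclude $L|_H=0$, whence $L=0$.

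The only genuine subtlety — and what I expect to be the main point needing care rather than a real obstacle — is making sure the reduction is faithful in both directions: that the algebraic identities defining a K\"ahler-like tensor on the ambient manifold restrict to precisely (not merely imply) the defining identities of a K\"ahler-like tensor on $(H,J_{\al},h)$, so that the hypothesis of the cited theorem is genuinely met. This is guaranteed because \eqref{L-kel2} was derived using only \eqref{str} and \eqref{HN-met}, and on $H$ the structures $J_{\al}=\f_{\al}|_H$ satisfy $J_\al^2=-I$, $J_3=J_1J_2=-J_2J_1$ with $h(J_{\al}X,J_{\al}Y)=\ea h(X,Y)$, which is the HN-metric almost hypercomplex setting of Section 5. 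A secondary point is pointwise versus global: the vanishing is established pointwise at each $p$, but since $p$ was arbitrary this yields $L=0$ as a tensor field, so nothing further is needed. I would keep the proof to a few lines: cite \eqref{L-kel2}, note the support-on-$H$ consequence, identify $L|_H$ as a K\"ahler-like tensor on the almost hypercomplex HN-metric manifold $(H,J_\al,h)$, and apply \cite{Man28}.
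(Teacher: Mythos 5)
Your argument is exactly the paper's: it derives the identities \eqref{L-kel2} (vanishing whenever a $\xi_{\al}$ occupies a slot, plus the K\"ahler-like identities in all arguments), concludes that $L$ is determined by its restriction to $H=H_1\cap H_2\cap H_3$ where it is a K\"ahler-like tensor for the almost hypercomplex HN-metric structure $(J_{\al}=\f_{\al}|_H,\,h=g|_H)$, and then invokes the vanishing theorem of \cite{Man28} to get $L|_H=0$ and hence $L=0$. The proposal is correct and follows essentially the same route as the paper, only spelling out the multilinear decomposition step more explicitly.
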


Let $R$ be the curvature tensor of the Levi-Civita connection
$\n$ generated by $g$.
It is defined as usual by
$R(x,y)z=\left[\n_x, \n_y\right] z - \n_{\left[x,y\right]} z$. The corresponding $(0,4)$-tensor,
denoted by the same letter, is determined by
$R(x,y,z,w)=g\left(R(x,y)z,w\right)$.

According to \cite{GriManDim12}, every hyper-K\"ahler HN-metric manifold is flat.
Since $R$ is a K\"ahler-like tensor on every manifold with cosymplectic HN-metric 3-structure, i.e. $\n\f_\al=0$ for all $\al=1,2,3$, then applying \propref{prop:L=0} we obtain
\begin{prop}\label{prop:R=0}
Every manifold with cosymplectic HN-metric 3-structure is flat.
\end{prop}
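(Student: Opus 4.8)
The plan is to reduce Proposition~\ref{prop:R=0} to Proposition~\ref{prop:L=0} by verifying that the Levi-Civita curvature tensor $R$ of a manifold with cosymplectic HN-metric 3-structure is a K\"ahler-like tensor in the sense of \eqref{L-kel}. First I would recall that $R$ is automatically a curvature-like tensor, since it satisfies the antisymmetries and the first Bianchi identity \eqref{curv}. So the only thing left to check is the K\"ahler-like identity $R(x,y,z,w)=\ea R(x,y,\f_\al z,\f_\al w)$ for each $\al=1,2,3$.

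The key step is the observation that in the cosymplectic HN-metric case we have $\n\f_\al=0$ (equivalently $F_\al=0$) for all $\al$, and hence also $\n\xi_\al=\n\eta_\al=0$ and $\n g=0$ (the last being automatic for the Levi-Civita connection). From $\n\f_\al=0$ it follows immediately that $\f_\al$ commutes with the curvature operator: $R(x,y)\f_\al z=\f_\al R(x,y)z$. Indeed, differentiating $\n\f_\al=0$ and using $R(x,y)=[\n_x,\n_y]-\n_{[x,y]}$ gives $R(x,y)\f_\al z-\f_\al R(x,y)z=(\n^2\f_\al)(x,y,z)-(\n^2\f_\al)(y,x,z)=0$. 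Passing to the $(0,4)$-tensor and using the compatibility \eqref{HN-met-2f}, namely $g(\f_\al u,v)=-\ea g(u,\f_\al v)$, one computes
\[
R(x,y,\f_\al z,\f_\al w)=g\bigl(R(x,y)\f_\al z,\f_\al w\bigr)=g\bigl(\f_\al R(x,y)z,\f_\al w\bigr)=-\ea g\bigl(R(x,y)z,\f_\al^2 w\bigr).
\]
Now $\f_\al^2 w=-w+\eta_\al(w)\xi_\al$ by \eqref{str}, so the right-hand side equals $\ea R(x,y,z,w)-\ea\eta_\al(w)g(R(x,y)z,\xi_\al)$. The second term vanishes because $\n\xi_\al=0$ implies $R(x,y)\xi_\al=0$, and $g(R(x,y)z,\xi_\al)=-g(R(x,y)\xi_\al,z)=0$ by the skew-symmetry of $R$ in the last two slots. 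This yields exactly \eqref{L-kel}, so $R$ is K\"ahler-like.

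Having established that $R$ is a K\"ahler-like tensor on the manifold with almost contact HN-metric 3-structure, I would simply invoke Proposition~\ref{prop:L=0}, which asserts that every such tensor vanishes. Therefore $R=0$, i.e.\ the manifold is flat. The only genuinely delicate point is making sure that the cosymplectic hypothesis really does give $\n\xi_\al=\n\eta_\al=0$ and not merely $\n\f_\al=0$; but this is guaranteed by the definition of cosymplectic HN-metric manifold given in Section~\ref{sec:1}, where the $\n$-parallelism of the whole structure $(\f_\al,\xi_\al,\eta_\al,g,\widetilde g_\al)$ is built into the condition $F_\al=0$ (and in any case $\xi_\al$ can be recovered from $\eta_\al$ via \eqref{eta-g} and $\eta_\al$ from parallelism of $\f_\al$ together with \eqref{Fetaxi1}--\eqref{Fetaxi2}). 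So there is no real obstacle; the proof is a short formal computation followed by an appeal to Proposition~\ref{prop:L=0}.
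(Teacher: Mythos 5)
Your proposal is correct and follows essentially the same route as the paper: the cosymplectic condition $\nabla\f_\al=0$ (together with $\nabla\xi_\al=0$, which is part of the definition) makes the curvature tensor $R$ a K\"ahler-like tensor, and then \propref{prop:L=0} forces $R=0$. You merely spell out the routine verification of \eqref{L-kel} that the paper leaves implicit, so there is nothing to add.
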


%

\section{Examples of manifolds with almost contact HN-metric 3-structure}

\subsection{A real vector
space with contact HN-metric 3-structure}

Let $\V$ be a real vector space of dimension $(4n+3)$ and a (local) basis of $\V$ is denoted by
$
\left\{\ddx;\ddy;\ddu;\ddv;\dda,\right.$ $\left.\ddb,\ddc\right\}$, $i=1,2,\dots,n.
$
%
%
Every vector $z$ of $\V$ is represented in
the mentioned basis by its coordinates $z(x^i;y^i;u^i;v^i;\allowbreak{}a,\allowbreak{}b,c)$. 
%
%
Then, we consider
\begin{equation}\label{x-coord}
\begin{array}{lll}
\f_1z(-y^i;x^i;v^i;-u^i;0,-c,b),\quad & \eta_1(z)=a,\quad & \xi_1=\dda,\\
\f_2z(-u^i;-v^i;x^i;y^i;c,0,-a),\quad & \eta_2(z)=b,\quad & \xi_2=\ddb,\\
\f_3z(v^i;-u^i;y^i;-x^i;-b,a,0),\quad & \eta_3(z)=c,\quad & \xi_3=\ddc.
\end{array}
\end{equation}

\begin{defn}\label{d1}
A structure $(\f_{\al},\xi_{\al},\eta_{\al})$, $\al=1,2,3$ on
$\V$ is called \emph{a contact 3-structure} on
$\V$.
\end{defn}

Let us introduce a pseudo-Euclidian metric $g$ of
signature $(2n+2,2n+1)$ as follows
\begin{equation}\label{metric}
g(z,z')=\sum_{i=1}^n \left(x^ix'^i+y^i y'^i-u^iu'^i-v^iv'^i\right)-aa'+bb'+c c',
\end{equation}
where $z(x^i;y^i;u^i;v^i;a,b,c)$, $z'(x'^i;y'^i;u'^i;v'^i;a',b',c') \in \V$, $i=1,2,\dots,n$. This metric satisfies the following properties
\begin{equation}\label{metr-sv}
\begin{array}{c}
g(\f_{\al}z,\f_{\al}z')=\ea g(z,z') +\eta_{\al}(z)\eta_{\al}(z').
\end{array}
\end{equation}

Let $\n$ be the Levi-Civita connection of $g$. We check immediately that $\n\f_{\al}=0$ and therefore we get the following
\begin{prop}
The space $(\V,\f_{\al},\xi_{\al},\eta_{\al},g)$ is 
a manifold with cosymplectic HN-metric 3-structure.
\end{prop}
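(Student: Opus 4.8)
The plan is to verify directly that the explicitly given data on $\V$ satisfies all the axioms for a manifold with cosymplectic HN-metric 3-structure, as assembled from the earlier sections. Since $\V$ is a vector space with a constant (pseudo-Euclidean) metric \eqref{metric} and constant structure tensors \eqref{x-coord}, the Levi-Civita connection $\n$ is the flat connection in these linear coordinates, so $\n\f_\al$, $\n\xi_\al$, $\n\eta_\al$ and $\n g$ all vanish identically. Hence $F_\al=0$ for $\al=1,2,3$, which is precisely the defining condition of a cosymplectic HN-metric 3-structure, \emph{provided} that $(\f_\al,\xi_\al,\eta_\al,g)$ really is an almost contact HN-metric 3-structure in the first place.

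So the substance of the proof is the check that \eqref{x-coord} defines an almost contact 3-structure, i.e. satisfies \eqref{str}, and that \eqref{metric} is an HN-metric for it, i.e. satisfies \eqref{HN-met} with the sign convention \eqref{ea}. First I would record the action of each $\f_\al$ on the basis vectors as a block matrix: on the $4n$-dimensional part spanned by $\partial/\partial x^i,\partial/\partial y^i,\partial/\partial u^i,\partial/\partial v^i$ the three maps act as the standard quaternionic triple $(J_1,J_2,J_3)$ (after matching signs), while on the three-dimensional span of $\partial/\partial a,\partial/\partial b,\partial/\partial c$ they act by the $\mathfrak{so}(3)$-type rotations $\xi_\al\mapsto\epsilon_{\al\bt\gm}\xi_\gm$. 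From this one reads off $\f_\al\xi_\bt=\epsilon_{\al\bt\gm}\xi_\gm$, $\eta_\al(\xi_\bt)=\delta_{\al\bt}$, $\eta_\al\circ\f_\bt=\epsilon_{\al\bt\gm}\eta_\gm$ immediately, and a short computation of $\f_\al\f_\bt$ on each of the two blocks gives $\f_\al\f_\bt=-\delta_{\al\bt}I+\xi_\al\otimes\eta_\bt+\epsilon_{\al\bt\gm}\f_\gm$, establishing \eqref{str}. Then \eqref{metr-sv} — which is \eqref{HN-met} — is checked by plugging $\f_\al z$ from \eqref{x-coord} into \eqref{metric}: on the $4n$-part the effect of $\f_1$ preserves the form $\sum(x^ix'^i+y^iy'^i-u^iu'^i-v^iv'^i)$ while $\f_2,\f_3$ reverse its sign, matching $\ea$; on the three extra coordinates one gets the stated correction term $\eta_\al(z)\eta_\al(z')$. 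The signature count $(2n+2,2n+1)$ is consistent: $\ea=1$ leaves $g$ essentially unchanged, $\ea=-1$ is compensated by the $\eta_\al\otimes\eta_\al$ terms.

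After \eqref{str} and \eqref{metr-sv} are in hand, the conclusion is essentially automatic: constancy of all structure tensors in linear coordinates forces $\n$ to annihilate each of them, so each fundamental tensor $F_\al(x,y,z)=g((\n_x\f_\al)y,z)$ from \eqref{F} vanishes, which is the definition of a manifold with cosymplectic HN-metric 3-structure given at the end of Section~\ref{sec:1}. (As a free consequence, \propref{prop:R=0} then even tells us $R=0$, which one can also see directly since $g$ is the flat pseudo-Euclidean metric.)

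I do not expect a genuine obstacle here — this is a verification of a worked example rather than a theorem with a hard core. The only place demanding care is bookkeeping of signs: getting the Levi-Civita symbol convention in \eqref{str} to agree with the cyclic pattern built into \eqref{x-coord} (for instance checking $\f_2\f_3=\f_1+\xi_2\otimes\eta_3$ on the $4n$-block, with the off-diagonal $\xi$-terms landing correctly), and confirming that the $-\ep$ versus $+\ep$ signs in \eqref{metric} line up with $(\ep_1,\ep_2,\ep_3)=(1,-1,-1)$ in \eqref{ea}. Since the paper already asserts all of \eqref{x-coord}, \eqref{metric}, \eqref{metr-sv} and the flatness of $\n$ as established facts, the remaining write-up is just to note that constancy of the structure forces $\n\f_\al=0$, hence $F_\al=0$, hence the claim.
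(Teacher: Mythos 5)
Your proposal is correct and follows essentially the same route as the paper: verify \eqref{str} and \eqref{metr-sv} for the explicit data \eqref{x-coord}, \eqref{metric}, and then observe that the constancy of all structure tensors in the linear coordinates makes the flat Levi-Civita connection annihilate them, so $\n\f_\al=0$ (hence $F_\al=0$) and the structure is cosymplectic HN-metric. The paper's proof is just a terser statement of exactly this verification.
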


\subsection{A time-like sphere with almost contact HN-metric 3-structure}

It is known that any real hypersurface of an almost hypercomplex manifold carries in a natural way an almost contact 3-structure.

In a similar way it can be shown that on every real nonisotropic hypersurface of an almost hypercomplex HN-metric manifold there arises an almost contact HN-metric 3-structure.

Let $\R^{4n+4}=\bigl\{\left(x^i;y^i;u^i;v^i\right)\vert\ x^i,y^i,u^i,v^i\in\R,\ i\in\{1,2,\dots,n+1\}\bigr\}$ be a vector space of dimension $4n+4$ with an almost hypercomplex structure $(J_{1},J_{2},J_{3})$ determined as follows \cite{GriManDim12}
\[
J_1z(-y^i;x^i;v^i;-u^i),\quad
J_2z(-u^i;-v^i;x^i;y^i),\quad
J_3z(v^i;-u^i;y^i;-x^i)
\]
for an arbitrary vector $z(x^i;y^i;u^i;v^i)$.
This space is equipped with a pseudo-Eu\-clidean metric of neutral signature, i.e. $(2n+2,2n+2)$, by
\[
g(z,z')=\sum_{i=1}^{n+1} \bigl(x^ix'^i+y^iy'^i-u^iu'^i-v^iy'^i\bigr)
\]
for arbitrary $z(x^i;y^i;u^i;v^i), z'(x'^i;y'^i;u'^i;v'^i)\in\R^{4n+4}$.

Identifying an arbitrary point $p\in\R^{4n+4}$ with its position vector $z$, we study the following hypersurface of
$\R^{4n+4}$.

Let $\SSS: g(z,z)=-1$ be the unit time-like sphere of $g$ in $\R^{4n+4}$. Then $z$ coincides with the unit normal $N$ to the tangent space $T_p\SSS$ at $p$.

For $\al=1,2,3$ we determine the Reeb vector fields by the equalities
$\xi_{\al}=\lm_{\al}\cdot N+\mu_{\al}\cdot J_{\al}N$,
such that $g(N,\xi_{\al})=0$ and $g(\xi_{\al},\xi_{\al})=-\ea$.
We substitute $g(N,J_{\al}N)=\tan\psi_{\al}$ for $\psi_{\al}\in\left(-\frac{\pi}{2},\frac{\pi}{2}\right)$. Then we obtain
\begin{equation*}
\xi_{\al}=\sin\psi_{\al}\cdot N+\cos\psi_{\al}\cdot J_{\al}N.
\end{equation*}
Since $g(N,J_{1}N)=0$ then $\psi_{1}=0$ and therefore $\xi_{1}=J_{1}N$.
Because of $g(N,\xi_{\al})=0$ we have that $\xi_{\al}$ are in $T_p\SSS$.
The conditions $g(\xi_{\al},\xi_{\bt})=0$ for $\al\neq\bt$ are equivalent to $\psi_{2}=\psi_{3}=0$.
 Therefore we obtain for all $\al$
\begin{equation}\label{xi-exa}
\xi_{\al}=J_{\al}N.
\end{equation}
Using the latter equality and $J_{\al}J_{\al}N=-N$, we obtain that $J_{\al}\xi_{\al}=-N$.

We define the structure endomorphisms $\f_{\al}$ ($\al=1,2,3$) and the contact 1-forms $\eta_{\al}$ in $T_p\SSS$ by the following orthonormal decomposition of $J_{\al}x$ for arbitrary $x\in T_p\SSS$
\begin{equation}\label{fi-eta-exa}
  J_{\al}x=\f_{\al}x-\eta_{\al}(x)\cdot N,
\end{equation}
i.e. $\f_{\al}x$ is the tangent component of $J_{\al}x$ and $-\eta_{\al}(x)N$ is the corresponding normal component.
By direct computation \eqref{fi-eta-exa} implies \eqref{str}. Then, using \eqref{xi-exa}, we obtain \eqref{HN-met} and \eqref{eta-g}.
Thus we equip the unit time-like sphere $\SSS$ in $\R^{4n+4}$ with an almost contact HN-metric 3-structure $(\f_{\al},\xi_{\al},\eta_{\al},g)$.

Let $\bar\n$ and $\n$ be the Levi-Civita connections of the metric $g$ in $\R^{4n+4}$ and $\SSS$, respectively. Since $\bar\n$ is flat, the formulas of Gauss and Weingarten have the form
\begin{equation}\label{GW}
\bar\n_xy=\n_xy+g(x,y)N,\qquad
\bar\n_xN=x.
\end{equation}
Therefore one can obtain by \eqref{xi-exa}, \eqref{fi-eta-exa} and \eqref{GW} that
\[
\n_x\xi_{\al}=\f_{\al}x,
\qquad
F_{\al}(x,y,\xi_{\al})=-g(\f_{\al}x,\f_{\al}y).
\]
Then, for the Lee forms we have
\[
\ta_{\al}(\xi_{\al})=4n+2,\qquad \ta^*_{\al}(\xi_{\al})=0,\qquad \om_\al=0.
\]
Finally, we get
\begin{equation}\label{F-S}
F_{\al}(x,y,z)=-
g(\f_{\al}x,\f_{\al}y)\eta_{\al}(z)
+\ea g(\f_{\al}x,\f_{\al}z)\eta_{\al}(y).
\end{equation}

In the case $\al=1$, the equality \eqref{F-S} takes the form
\[
F_{1}(x,y,z)=
-g(x,y)\eta_{1}(z)
+g(x,z)\eta_{1}(y),
\]
i.e. by virtue of \eqref{HN-met}, \eqref{ea}, \eqref{eta-g} and \eqref{F}, we have
\[
(\n_x \f_{1})y = g(x,y)\xi_{1}-g(\xi_{1},y)x.
\]
According to \cite[Theorem 6.3]{Blair}, the latter equality is a necessary and sufficient condition for a Sasakian manifold.

Similarly, in the case $\al=2$ or $\al=3$,
from \eqref{F-S}, according to \cite[The\-o\-rem 3.3]{IvManMan45}, we get a necessary and sufficient condition for a Sasaki-like almost contact complex Riemannian manifold.

We recall that a Sasakian manifold (respectively, a Sasaki-like almost contact complex Riemannian manifold) is defined as an almost contact metric manifold (respectively, an almost contact B-metric manifold) which complex cone is a K\"ahler manifold  (respectively, a K\"ahler-Norden manifold) (cf. \cite{Blair}, \cite{IvManMan45}).

Thus, we obtain the following
\begin{prop}
The manifold $(\SSS,\f_{\al},\xi_{\al},\eta_{\al},g)$ is$:$
  \begin{enumerate}
    \item a Sasakian manifold for $\al=1;$
    \item a Sasaki-like almost contact complex Riemannian manifold for $\al=2,3.$
\end{enumerate}
\end{prop}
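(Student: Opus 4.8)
The plan is to work directly on the time-like sphere $\SSS$ and verify the two defining cone conditions using the formula \eqref{F-S} for $F_\al$, which the preceding computations already establish from the Gauss--Weingarten equations \eqref{GW}. For part (i), I would specialize \eqref{F-S} to $\al=1$. Since $\ep_1=1$ and, by \eqref{HN-met}, \eqref{eta-g}, we have $g(\f_1x,\f_1y)=g(x,y)-\eta_1(x)\eta_1(y)$ and $\eta_1(x)=-g(\xi_1,x)$, substitution collapses the right-hand side to $-g(x,y)\eta_1(z)+g(x,z)\eta_1(y)$, and then raising an index via \eqref{F} turns this into $(\n_x\f_1)y=g(x,y)\xi_1-g(\xi_1,y)x$. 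This is exactly the classical characterization of a Sasakian manifold among almost contact metric manifolds (Blair, Theorem~6.3), so the claim for $\al=1$ follows immediately. Note one must be slightly careful with the sign convention $\ep=+1$ in \eqref{A-met} versus Blair's $\ep=-1$ normalization, but the remark following \eqref{A-met} records precisely the change of sign $(\bar\xi_1,\bar\eta_1,\bar g)=(-\xi_1,-\eta_1,-g)$ that reconciles the two, so nothing substantive is lost.

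For part (ii), with $\al=2$ or $\al=3$ we have $\ea=-1$ and the metric is a B-metric by \thmref{thm:3str}. Here I would again start from \eqref{F-S}, now reading $g(\f_\al x,\f_\al y)=-g(x,y)+\eta_\al(x)\eta_\al(y)$ and $\eta_\al(x)=g(\xi_\al,x)$ from \eqref{B-met}--\eqref{B-met3}, and rewrite $F_\al$ in terms of $g$, $\eta_\al$ and $\xi_\al$ alone. The resulting expression should match the characterizing identity for a Sasaki-like almost contact B-metric (complex Riemannian) manifold in the form given by Ivanov--Manev--Manev (Theorem~3.3 of \cite{IvManMan45}), namely that $F_\al(x,y,z)$ is expressed through $g(\f_\al x,\f_\al y)$, $\eta_\al(y)$, $\eta_\al(z)$ with the appropriate coefficients; indeed \eqref{F-S} with $\ea=-1$ is literally $F_\al(x,y,z)=-g(\f_\al x,\f_\al y)\eta_\al(z)-g(\f_\al x,\f_\al z)\eta_\al(y)$, which is the Sasaki-like normal form. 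The auxiliary data computed just above the statement --- $\n_x\xi_\al=\f_\al x$, $F_\al(x,y,\xi_\al)=-g(\f_\al x,\f_\al y)$, and the Lee forms $\ta_\al(\xi_\al)=4n+2$, $\ta^*_\al(\xi_\al)=0$, $\om_\al=0$ --- are exactly the quantities appearing in that classification result, so the identification is a matter of matching terms.

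The main obstacle is bookkeeping rather than conceptual depth: one must keep straight (a) the three sign conventions in play --- $\ep=+1$ in \eqref{A-met}, Blair's $\ep=-1$, and the $\ea$ pattern $(1,-1,-1)$ --- and (b) the precise form in which the cited theorems state their characterizations, since \cite[Theorem 6.3]{Blair} and \cite[Theorem 3.3]{IvManMan45} are phrased for a single structure and one is invoking them on each distribution/structure index separately. The geometric content --- that $\SSS$, being a non-isotropic hypersurface of the flat hypercomplex HN-metric space $\R^{4n+4}$ with $\bar\n$ flat, has shape operator equal to the identity by \eqref{GW}, whence $\n_x\xi_\al=\f_\al x$ --- is already secured. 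So the proof is: derive \eqref{F-S} (done above), specialize to $\al=1$ to get the Sasakian cone condition via \cite{Blair}, and specialize to $\al=2,3$ to get the Sasaki-like cone condition via \cite{IvManMan45}. I would also remark explicitly that the complex cone construction in each case is the one used to \emph{define} ``Sasakian'' and ``Sasaki-like'', so that the verified pointwise identities are genuinely equivalent to the cone being K\"ahler, respectively K\"ahler--Norden.
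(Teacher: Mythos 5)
Your proposal follows essentially the same route as the paper: derive \eqref{F-S} from the Gauss--Weingarten relations, specialize to $\al=1$ to obtain $(\n_x\f_1)y=g(x,y)\xi_1-g(\xi_1,y)x$ and invoke \cite[Theorem 6.3]{Blair}, then specialize to $\al=2,3$ with $\ea=-1$ and invoke \cite[Theorem 3.3]{IvManMan45}. One small slip: \eqref{HN-met} with $\ep_1=+1$ gives $g(\f_1x,\f_1y)=g(x,y)+\eta_1(x)\eta_1(y)$ (not with a minus sign), though this does not affect your conclusion since the $\eta_1(x)\eta_1(y)\eta_1(z)$ terms cancel either way.
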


In \cite{GaMiGr}, it is considered a unit time-like sphere with almost contact B-met\-ric structure and it is proved that it belongs to the class $\F_4\oplus\F_5$, the analogue of trans-Sasakian manifold of type $(\al,\bt)$.

%




\end{document}